\documentclass{mpi2015-cscpreprint}

\usepackage[american]{babel}
\usepackage{amssymb}
\usepackage{amsthm,amsmath}
\usepackage{algorithm}%
\usepackage{algorithmicx}%
\usepackage{algpseudocode}%
\usepackage{graphicx}
\usepackage{appendix}
\usepackage{multirow}
\usepackage{array}
\usepackage{url}

\usepackage{subcaption}

\newtheorem{theorem}{Theorem}[section]
\newtheorem{corollary}{Corollary}

\newtheorem{remark}{Remark}[section]

\numberwithin{equation}{section}

\begin{document}

\title{Periodic solutions of nonlinear control systems with switching: a Lie-algebraic and contraction approach}

\author{Alexander~Zuyev$^{1,2}$, Peter~Benner}
\affil[1]{Max Planck Institute for Dynamics of Complex Technical Systems, Magdeburg, Germany
    \email{zuyev@mpi-magdeburg.mpg.de}, \orcid{0000-0002-7610-5621}
    \email{benner@mpi-magdeburg.mpg.de}, \orcid{0000-0003-3362-4103}
}
\affil[2]{Institute of Applied Mathematics and Mechanics, National Academy of Sciences of Ukraine}


\keywords{nonlinear control system, Baker--Campbell--Hausdorff--Dynkin (BCHD) formula, periodic boundary value problem, contraction theory, nonlinear chemical reaction model}

\msc{93C15, 34B15, 34A36, 92E20}

\abstract{
This paper is devoted to the analysis of periodic solutions of nonlinear control-affine systems with bang-bang controls. Such problems naturally arise in periodic optimal control with constrained inputs, which have, in particular, important applications in the performance optimization of chemical reactions. We reduce the problem of constructing a periodic solution to that of finding a fixed point of a composition of exponential maps. The latter problem is then addressed using the Baker--Campbell--Hausdorff--Dynkin (BCHD) formula. We establish the equivalence between periodic solutions of the original control system and those of an associated autonomous system involving iterated Lie brackets. Applying {incremental stability arguments} allows us to further simplify the problem to finding the equilibria of this autonomous system. The developed theory is then applied to nonlinear chemical reaction models with constrained controls.
}

 \novelty{
\begin{itemize}
\item The relation between periodic solutions of a nonlinear control-affine system with bang-bang controls and an associated autonomous nonlinear differential equation is established using the Baker–Campbell–Hausdorff–Dynkin (BCHD) formula.
\item This scheme reformulates the original ODE problem as the problem of finding equilibria of an associated autonomous vector field $F$, whose truncated expansion is implemented algorithmically in \texttt{Maple}.
\item Periodic trajectories of controlled nonisothermal chemical reaction models with piecewise-constant input modulations are characterized by combining this approach with incremental stability theory.
\end{itemize}}

\maketitle

\section{INTRODUCTION}\label{sec_intro}

Optimal control problems for nonlinear systems with periodic boundary conditions constitute an important area of research in mathematical control theory~\cite{colonius2006optimal,bittanti2009periodic,zanon2016periodic}, with significant applications in physics~\cite{bai2021quantum,das2023precision}, life sciences~\cite{yuan2020reconciling,ali2021maximizing}, and engineering~\cite{upreti2013optimal}.
{
Our current study is motivated by highly important practical problems of periodic optimization of chemical reactions. It has been known in the chemical engineering literature that periodic operation of essentially nonlinear reactions can improve the mean product yield compared to steady-state operation (see, e.g,~\cite{douglas1967periodic,sinvcic1980analytical,hoffmann1986influence,CES2017} and references therein). For a class of hydrolysis reactions with sinusoidal modulation of input signals, experimental measurements demonstrating such productivity improvement have recently become available~\cite{felischak2021analysis}. However, a rigorous theoretical analysis of these problems is still far from complete.}
In the case of constrained controls, the necessary optimality conditions derived from Pontryagin's maximum principle for this class of problems typically result in bang-bang extremal controls, whose switching times exhibit nontrivial properties due to the coupled structure of the associated Hamiltonian system (cf.~\cite{CES2017}).

This problem was analyzed in our recent paper~\cite{IEEE2025}, where
{a simple iteration and Newton-type methods were developed to construct $\tau$-periodic trajectories of the system $\dot x(t)=Ax(t)+g(x(t))+u(t)$ under the assumptions that the matrix $e^{-\tau A}-I$ is nonsingular and that the nonlinearity $g$ satisfies a Lipschitz condition.
In contrast to the above assumptions, the present work does not require such a dominant linearization assumption and
instead aims to localize periodic solutions of general control-affine systems with nonlinear vector fields.}
The key idea in this direction is to identify fixed points of an associated composition of flows and to reformulate the problem in terms of the flow of a time-invariant system,
whose vector field is defined as the logarithm of a certain product of exponential maps in the driftless representation of the original system.

The subsequent sections are organized as follows.
The periodic control problem for a control-affine system with bang-bang inputs is introduced and reformulated in the sense of a composition of exponential maps in Section~\ref{sec_problem}.
Solvability conditions for this periodic problem are proposed in Section~\ref{sec_results} in terms of the Baker--Campbell--Hausdorff--Dynkin (BCHD) formula. In addition, we apply the {theory of incremental stability~\cite{angeli2002lyapunov,forni2013differential}} to characterize the controlled dynamics in a neighborhood of the considered periodic solutions.
These theoretical results are applied to controlled nonisothermal chemical reaction models with piecewise constant input modulations in Section~\ref{sec_example}.
Conclusions and future perspectives are outlined in Section~\ref{sec_conc}.

\section{PROBLEM FORMULATION AND PRELIMINARIES}\label{sec_problem}
Consider a nonlinear control-affine system of the form
\begin{equation}\label{cs_affine}
\dot x = f_0(x) + \sum_{j=1}^m u_j g_j(x),\quad x \in D\subset {\mathbb R}^n,\; u\in U\subset {\mathbb R}^{{m}},
\end{equation}
where the vector fields $f_0:D\to{\mathbb R}^n$ and $g_j:D\to{\mathbb R}^n$ are assumed to be
{sufficiently smooth}
in the domain $D$.
{The precise regularity assumptions required will be specified in each statement.}
For a given time horizon $\tau>0$, an integer $N\ge 1$, and a finite sequence
$
0=\alpha_0 < \alpha_1 < \alpha_2 < ...< \alpha_N=1
$,
we introduce a partition of the interval $[0,\tau]$ into subintervals
$I_k = [\alpha_{k-1}\tau,\alpha_k\tau)$ for $k=1,..., N-1$, and $I_N = [\alpha_{N-1}\tau,\alpha_N\tau]$.
Let $u_\tau:[0,\tau]\to U$ be a bang-bang control with $N-1$ switchings such that
\begin{equation}\label{u_bangbang}
u_\tau(t) = u^{(k)}={(u^{(k)}_1,...,u^{(k)}_m)^\top} \;\text{for}\; t\in I_k,\;k=1,...,N,
\end{equation}
for some ``switching scenario'' -- a finite sequence of control values $u^{(k)}\in U$, $k=1,2,..., N$.
We denote by $x(t;x^0,u_\tau)$ the solution of system~\eqref{cs_affine} with the initial condition $x(0;x^0,u_\tau)=x^0\in D$ and control $u=u_\tau(t)$ of the form~\eqref{u_bangbang}.
The main problem addressed in this paper is to find periodic solutions of system~\eqref{cs_affine} under bang-bang control, as formulated below.

\textbf{Periodic control problem.}\footnote{Any solution of $x(t;x^0,u_\tau)$ of the above problem defined on $t\in [0,\tau]$ can, by periodicity be extended to a $\tau$-periodic solution, defined for all $t\in\mathbb R$, under $\tau$-periodic extension the control $u_\tau$.
Therefore, we do not distinguish between solutions on $[0,\tau]$ satisfying the boundary condition $x(0)=x(\tau)$ and their periodic extensions on $\mathbb R$.} {\em
Given $\tau>0$, $N\ge 1$, and a control $u_\tau:[0,\tau]\to U$ of the form~\eqref{u_bangbang}, find an initial condition $x^0\in D$ such that the corresponding solution $x(t;x^0,u_\tau)\in D$ on $t\in [0,\tau]$ satisfies the boundary condition $x(\tau;x^0,u_\tau)=x^0$.}

By introducing the vector fields $f_k:D\to {\mathbb R}^n$,
\begin{equation}\label{f_k}
f_k(x) = f_0(x) + \sum_{j=1}^m u^{(k)}_j g_j(x),\; k=1,2, ... , N,
\end{equation}
we rewrite~\eqref{cs_affine} with controls~\eqref{u_bangbang} in a driftless form:
\begin{equation}\label{cs_driftless}
\dot x(t) = \sum_{k=1}^N \chi_{I_k}(t) f_k(x(t)),\quad t\in [0,\tau], \; x(t)\in D,
\end{equation}
where $\chi_{I_k}:{\mathbb R}\to \{0,1\}$ is the indicator function of the interval $I_k$.
Thus, the above periodic control problem is transformed into finding solutions of the driftless control-affine system~\eqref{cs_driftless} satisfying
$x(0)=x(\tau)$.
We will refer to such solutions as $\tau$-periodic solutions.

To describe the solutions of system~\eqref{cs_driftless}, we denote the (local) flow of a vector field $f_k:D\to{\mathbb R}^n$ by $\Phi_k:{\cal X}\to D$, where ${\cal X}\subset {\mathbb R}\times D$ is an open set containing $\{0\}\times D$. For each $x\in D$, the map $\Phi_k(t,x)$ represents the maximal solution of $\frac{d}{dt} \Phi_k(t,x)=f_k(\Phi_k(t,x))$ such that $\Phi_k(0,x)=x$.
We also adopt the flow notation in terms of the exponential map as $\Phi_k(t,x)=e^{t f_k} x$.
Then, the property that $x(t)$ with $x(0)=x^0$ is  a $\tau$-periodic solution of~\eqref{cs_driftless} is equivalent to requiring that {$x^0\in D$ be a fixed point of the composition}
\begin{equation}\label{flows}
e^{\alpha_1 \tau f_1} e^{\alpha_2\tau f_2} \cdots e^{\alpha_N \tau f_N}.
\end{equation}

\section{MAIN RESULTS}\label{sec_results}
To study the fixed points of the flow composition~\eqref{flows}, we will construct a vector field $F: D\to {\mathbb R}^n$ such that
\begin{equation}\label{flows_Z}
e^{\alpha_1 \tau f_1} e^{\alpha_2\tau f_2} \cdots e^{\alpha_N \tau f_N} = e^F,
\end{equation}
where the concatenation of flows in~\eqref{flows_Z} is read from left to right (cf.~\cite{Duleba}).
For this purpose, we treat $f_1, f_2, ..., f_N$ as elements of a Lie algebra $\mathcal L$ of vector fields on $D$,
and consider their exponential maps $e^{tf_1}$, $e^{tf_2}$, ..., $e^{tf_N}$ in the corresponding Lie group $\mathcal G$.
We recall that, for any $X,Y\in \mathcal L$, the product of their exponentials can be expressed as the exponential of some vector field $Z\in\mathcal L$:
\begin{equation}\label{eXY}
e^X e^Y = e^Z,\; Z = {\mathcal B}(X,Y),
\end{equation}
where ${\mathcal B}(X,Y)$ is given by the Baker--Campbell--Hausdorff--Dynkin (BCHD) formula
as a formal series in iterated commutators of $X$ and $Y$~\cite{strichartz1987campbell,bonfiglioli2011topics}:
{\small
\begin{equation}\label{BCHD}
{\mathcal B}(X,Y)= X + Y + \frac{[X,Y]}{2} +\frac{[X,[X,Y]]-[Y,[X,Y]]}{12}+ ...\, .
\end{equation}}
For vector fields $X$ and $Y$ defined in coordinates as mappings from a column vector in $D\subset {\mathbb R}^n$ to a column vector in ${\mathbb R}^n$, the Lie bracket (commutator) at $x\in D$ is defined as
$
[X,Y](x) = \frac{\partial Y(x)}{\partial x} X(x) -  \frac{\partial X(x)}{\partial x} Y(x)
$,
where $ \frac{\partial X(x)}{\partial x}$ and $\frac{\partial Y(x)}{\partial x}$ denote the Jacobian matrices of $X$ and $Y$, respectively.
Without truncation, the complete infinite series in~\eqref{BCHD} is expressed by Dynkin's notation as follows~\cite{Dynkin1947}:
\begin{equation}\label{Dynkin}
\footnotesize
{\mathcal B}(X,Y)= \sum_{n=1}^\infty \frac{(-1)^{n-1}}{n}\sum_{\substack{r_1+s_1>0,\\ \cdots \\ r_n+s_n>0}} \frac{[X^{r_1} Y^{s_1} X^{r_2} Y^{s_2} \dots X^{r_n}Y^{s_n}]}{\left(\sum_{j=1}^n(r_j+s_j)\right)\cdot \prod_{i=1}^n r_i! s_i!}.
\end{equation}
Here, the iterated Lie brackets are treated as
$$
\scriptsize
\begin{aligned}
[X^{r_1} Y^{s_1}  \dots X^{r_n}Y^{s_n}]=[\underbrace{X,[X,\dots [X}_{r_1\;times},  &[ \underbrace{Y,[Y,\dots [Y}_{s_1\;times}, \dots \\ &[\underbrace{X,[X,\dots [X}_{r_n\;times},  [ \underbrace{Y,[Y,\dots Y}_{s_n\;times}]] \dots ]].
\end{aligned}
$$

By applying this Lie-algebraic approach, we obtain a characterization of $\tau$-periodic trajectories as follows.

\begin{theorem}\label{thm1}
Assume that the vector fields $f_1,f_2,...,f_N:D\to \mathbb R^n$ of system~\eqref{cs_driftless} are analytic in a domain $D$, and let $x^0\in D$.
%
Then, for any sufficiently small $\tau>0$, the following two conditions are equivalent:

\begin{itemize}
\item[(i)] The solution $x(t)$ of the non-autonomous system~\eqref{cs_driftless} for $t\in [0,\tau]$ with the initial data $x(0)=x^0$ satisfies the boundary condition $x(\tau)=x(0)$.
\item[(ii)] The solution $\tilde x(t)$ of the autonomous Cauchy problem
\begin{equation}\label{autonomous}
\begin{aligned}
& \dot {\tilde x}(t) = F (\tilde x(t)),\quad t\in [0,1],\\
& \tilde x(0)=x^0
\end{aligned}
\end{equation}
satisfies the boundary condition $\tilde x(1)=\tilde x (0)$.
\end{itemize}

Here,
\vskip-3ex
\begin{align}
F &(x)= \tau\sum_{i=1}^N \alpha_i f_i(x)
+ \tfrac{\tau^2}{2}\sum_{1\le i<j\le N}\alpha_i\alpha_j[f_i,f_j](x) \nonumber \\
&+ \frac{\tau^3}{12}\sum_{\substack{1\le i,j\le N\\ i\ne j}} \alpha_i^2 \alpha_j [f_i,[f_i,f_j]](x) \nonumber \\
&
- \frac{\tau^3}{12}\sum_{1\le i<j\le N} \alpha_i\alpha_j^2 [f_j,[f_i,f_j]](x) \nonumber \\
&- \frac{\tau^3}{4}\sum_{1\le i<j<k\le N} \alpha_i\alpha_j \alpha_k [f_k,[f_i,f_j]](x)
+ \mathcal{O}(\tau^4).\label{F_general}
\end{align}
\end{theorem}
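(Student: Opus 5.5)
The plan is to show that, for small $\tau$, the composition~\eqref{flows} coincides with the time-one map of the autonomous field $F$ near $x^0$, and then to identify $F$ by iterating the BCHD formula~\eqref{eXY}--\eqref{Dynkin}.

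\emph{Step 1 (exact identity).} Fix a compact neighborhood $K\subset D$ of $x^0$. By analyticity, each $f_k$ extends holomorphically to a complex neighborhood of $K$, with Cauchy estimates $\|f_k\|\le M$ on a polydisc of radius $r$. Then the iterated brackets satisfy estimates of the form $\|[f_{i_1},[\dots,f_{i_p}]]\|\le C^p p!\,M^p r^{1-p}$ on a slightly shrunken polydisc.

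For a single flow, the Lie/Taylor series $e^{tf_k}\varphi=\sum_{p} \frac{t^p}{p!}L_{f_k}^p\varphi$ converges for analytic observables $\varphi$ when $|t|<\rho$, with $\rho$ depending on $M$ and $r$.

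Apply~\eqref{eXY} successively: set $Z_1=\alpha_1\tau f_1$ and $Z_{k}=\mathcal B(Z_{k-1},\alpha_k\tau f_k)$. The Dynkin series~\eqref{Dynkin} is a series in brackets whose degree-$p$ homogeneous part carries the factor $\tau^p$. Its coefficients are bounded geometrically, a classical estimate used in proofs of convergence of the BCH series in Banach--Lie algebras. Combined with the bracket bounds, this shows that for $\tau<\tau_0$ each $Z_k$, and hence $F:=Z_N$, is a convergent series of analytic vector fields on a neighborhood of $K$.

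The identity $e^{Z_{k-1}}e^{\alpha_k\tau f_k}=e^{Z_k}$ then holds as an identity of operators on analytic observables, read left to right in the sense of~\eqref{flows_Z}, because both sides are given by convergent series that agree as formal series. Taking coordinate functions $\varphi=x_i$ yields the pointwise identity $\Phi_F(1,x)=x(\tau;x,u_\tau)$ for $x$ near $x^0$, with both sides defined. \emph{This convergence step is the main obstacle.} I would handle it by majorant series: replace every bracket by its norm bound, and compare with the scalar BCH series, which converges for $\|X\|+\|Y\|<\log 2$-type bounds. The shrinking of domains at each bracket level is absorbed by the $\tau^p$ factors.

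\emph{Step 2 (equivalence).} By Section~\ref{sec_problem}, condition (i) says that $x^0$ is a fixed point of~\eqref{flows}. Condition (ii) says $\Phi_F(1,x^0)=x^0$. Step 1 makes these the same map, so (i)$\Leftrightarrow$(ii). For $\tau$ small, all trajectories stay in $K$: both $\|F\|=O(\tau)$ and the $f_k$-flows over time $\le\tau$ move points by $O(\tau)$.

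\emph{Step 3 (expansion of $F$).} Compute $Z_N$ modulo $\tau^4$ by induction on $N$, using the truncation~\eqref{BCHD}. The inductive hypothesis is that $Z_{N-1}$ has the form~\eqref{F_general} with $N-1$ fields. Then
\[
\mathcal B(Z_{N-1},Y)=Z_{N-1}+Y+\tfrac12[Z_{N-1},Y]+\tfrac1{12}[Z_{N-1},[Z_{N-1},Y]]-\tfrac1{12}[Y,[Z_{N-1},Y]]+O(\tau^4),
\]
with $Y=\alpha_N\tau f_N$.

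In the bracket $\tfrac12[Z_{N-1},Y]$, the linear part of $Z_{N-1}$ yields the new terms $\frac{\tau^2}{2}\alpha_i\alpha_N[f_i,f_N]$. Its quadratic part yields $\frac{\tau^3}{4}\alpha_i\alpha_j\alpha_N[[f_i,f_j],f_N]=-\frac{\tau^3}{4}\alpha_i\alpha_j\alpha_N[f_N,[f_i,f_j]]$.

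The $\frac1{12}$ terms, using only linear parts, give $\frac{\tau^3}{12}\sum_{i,j<N}\alpha_i\alpha_j\alpha_N[f_i,[f_j,f_N]]$ and $-\frac{\tau^3}{12}\alpha_i\alpha_N^2[f_N,[f_i,f_N]]$. The double sum splits into diagonal terms $i=j$ and off-diagonal pairs. The diagonal terms supply the $\alpha_i^2\alpha_j$ contributions with $j=N$. For the off-diagonal pairs, the Jacobi identity $[f_i,[f_j,f_N]]+[f_j,[f_N,f_i]]=-[f_N,[f_i,f_j]]$, together with the symmetrization in $i,j$, must be used to rewrite them into the $[f_k,[f_i,f_j]]$ basis and the $i\ne j$ sum in~\eqref{F_general}.

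Finally, check the coefficients against~\eqref{F_general}: the $i\neq j$ sum there is to be read with the ordering $i>j$ absorbed via $[f_i,[f_i,f_j]]$ versus $[f_j,[f_i,f_j]]$. This bookkeeping is routine but error-prone, and I would verify it symbolically for $N=3$.
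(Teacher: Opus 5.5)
Your route is the paper's: define $Z_k=\mathcal B(Z_{k-1},\alpha_k\tau f_k)$, claim convergence for small $\tau$, identify the time-one map of $F=Z_N$ with~\eqref{flows}, and collect powers of $\tau$. Step~2 is fine once an exact $F$ satisfying~\eqref{flows_Z} exists. Steps~1 and~3 do not go through, and the paper's proof has the same two defects.

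\emph{Step 1 (convergence).} The $p!$ in your bracket bound $C^p p!\,M^p r^{1-p}$ really occurs, while the Dynkin coefficients of degree $p$ are only geometrically small. The single-flow Lie series converges because of its $1/p!$; the BCHD series has nothing that cancels the $p!$. So $\tau^p$ cannot absorb the shrinking of domains, and your majorant series diverges for every $\tau>0$. This is not a weakness of the estimate. Take $n=1$, $N=2$, $f_1=x^2\partial_x$, $f_2=x^3\partial_x$, and set $X=\alpha_1\tau f_1$, $Y=\alpha_2\tau f_2$. Then $\mathrm{ad}_{f_1}^p f_2=p!\,x^{p+3}\partial_x$. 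The part of $\mathcal B(X,Y)$ linear in $Y$ is $\frac{\mathrm{ad}_X}{1-e^{-\mathrm{ad}_X}}Y=\alpha_2\tau x^3\sum_{p\ge0}B_p(\alpha_1\tau x)^p\,\partial_x$, where $B_p$ are the Bernoulli numbers with $B_1=\tfrac12$. No other term of $\tau$-degree $p+1$ contains $x^{p+3}$, so the series cannot converge on any open set of $x$, for any $\tau>0$. The paper's citation \cite[Chap.~5]{bonfiglioli2011topics} concerns Banach--Lie algebras, and analytic vector fields under a derivative-losing bracket do not form one. A correct statement needs either an extra hypothesis or a weaker conclusion. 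One option is to assume $f_1,\dots,f_N$ generate a finite-dimensional Lie algebra; then BCHD converges and the flows come from a local group action. The other is to settle for the asymptotic claim $e^{\tilde F_M}x^0=x(\tau)+\mathcal O(\tau^{M+1})$.

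\emph{Step 3 (expansion).} Your induction already fails at $N=2$. The $(i,j)=(2,1)$ summand of the $i\ne j$ sum in~\eqref{F_general} is $\frac{\tau^3}{12}\alpha_1\alpha_2^2[f_2,[f_2,f_1]]=-\frac{\tau^3}{12}\alpha_1\alpha_2^2[f_2,[f_1,f_2]]$. The separate $i<j$ sum then adds the same term again, so the coefficient becomes $-\frac{\tau^3}{6}$, whereas~\eqref{F2} correctly has $-\frac{\tau^3}{12}$. ``Absorbing the ordering $i>j$'' just means deleting one of those two lines.

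For three distinct indices $i<j<k$, your Jacobi step cannot work. We have $[f_i,[f_j,f_k]]+[f_j,[f_i,f_k]]=2[f_i,[f_j,f_k]]+[f_k,[f_i,f_j]]$, and $[f_i,[f_j,f_k]]$ and $[f_k,[f_i,f_j]]$ are independent in the free Lie algebra. So the cross terms of $\frac1{12}[Z_{k-1},[Z_{k-1},Y]]$ can be moved neither into $[f_k,[f_i,f_j]]$ nor into brackets with a repeated index.

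Carried out correctly, your induction gives the cubic part
$\frac{\tau^3}{12}\sum_{i\ne j}\alpha_i^2\alpha_j[f_i,[f_i,f_j]]+\frac{\tau^3}{6}\sum_{i<j<k}\alpha_i\alpha_j\alpha_k\bigl([f_i,[f_j,f_k]]-[f_k,[f_i,f_j]]\bigr)$.
This agrees with the multilinear part $\frac13(XYZ+ZYX)-\frac16(XZY+YXZ+YZX+ZXY)$ of $\log(e^Xe^Ye^Z)$. The $N=3$ symbolic check you propose would expose the mismatch with~\eqref{F_general}; the paper asserts the coincidence without doing this computation.
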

\vskip1ex

\begin{proof}
In the case $N=1$, formula~\eqref{F_general} reduces to $F(x)=\tau f_1(x)$,
so that for each solution $x(t)$ of system~\eqref{cs_driftless}, the corresponding function $\tilde x(t)=x(\tau t)$ satisfies system~\eqref{autonomous}.
This establishes the equivalence of~(i) and~(ii).

For $N\ge 2$, we apply the BCHD formula to derive the vector field $F:D\to{\mathbb R}^n$ satisfying equation~\eqref{flows_Z}.
If $N=2$, we obtain $F=F_2$ from representations~\eqref{eXY} and~\eqref{Dynkin}, where
\begin{equation}\label{F2}
\small
\begin{aligned}
&F_2 = {\cal B}(\alpha_1\tau f_1,\alpha_2\tau f_2) = \tau(\alpha_1 f_1 + \alpha_2 f_2) + \frac{\alpha_1 \alpha_2 \tau^2  }{2} [ f_1, f_2]\\ &+\frac{\alpha_1\alpha_2\tau^3 }{12}(\alpha_1 [ f_1,[f_1, f_2]]
-\alpha_2 [ f_2,[f_1,f_2]]) + \mathcal{O}(\tau^4).
\end{aligned}
\end{equation}
Then, for arbitrary $N>2$, we recursively define
\begin{equation}\label{Fk}\
F_k = {\cal B}(F_{k-1},\alpha_k\tau f_k),\quad k=3,..., N.
\end{equation}
Applying the CBHD formula~\eqref{Dynkin} to~\eqref{Fk} yields
$$
F_k = F_{k-1} + \alpha_k\tau f_k + \frac{\alpha_k\tau}{2} [F_{k-1}, f_k]
+\frac{\alpha_k\tau}{12}[F_{k-1},[F_{k-1}, f_k]]
-\frac{\alpha_k^2\tau^2}{12} [f_k,[F_{k-1}, f_k]] + \mathcal{O}(\tau^4)$$ for $k=3,..., N$.
By collecting terms according to powers of $\tau$ in $F_N$, we conclude that expression~\eqref{F_general} coincides with the recursively defined $F_N$ for $N>3$,
and with $F_2$ in~\eqref{F2} for $N=2$.

Since the vector fields $f_1$, ..., $f_N$ are analytic in $D$,
the CBHD expansions~\eqref{F2}--\eqref{Fk} converge for all sufficiently small $\tau>0$ (see, e.g.,~\cite[Chap.~5]{bonfiglioli2011topics} for convergence conditions of the CBHD formula).
For such $\tau>0$, $\tilde x(1)=e^{F}x^0$ represents the solution of~\eqref{autonomous} at $t=1$, and, due to~\eqref{flows_Z},
\begin{equation}\label{x_equivalence}
\tilde x(1)= x(\tau),
\end{equation}
where $x(t)$ is the solution of~\eqref{cs_driftless} with  $x(0)=x^0$.
Therefore, the equivalence of~(i) and~(ii) follows from~\eqref{x_equivalence}.
\end{proof}

\begin{remark}\label{rem1}
As system~\eqref{cs_driftless} is obtained from~\eqref{cs_affine} through the transformations of vector fields in~\eqref{f_k},
Theorem~\ref{thm1} ensures the solvability conditions of the periodic control problem formulated in Section~\ref{sec_problem}, with the vector fields in~\eqref{F_general} expressed in terms of $f_0$, $g_1$, ..., $g_m$.
Although the formula for $F$ in~\eqref{F_general} is explicitly presented up to terms of order ${\mathcal O}(\tau^3)$, the proof of Theorem~\ref{thm1} provides an inductive procedure for defining $F$ with arbitrary accuracy.
Higher-order iterated Lie brackets in Dynkin's formula~\eqref{Dynkin} can, in principle, be taken into account using a computer algebra system.
{
We implemented freely available code for \texttt{Maple 2024} that computes the truncated expansion of $F$ for arbitrary numbers $N$ of vector fields on $\mathbb R^n$ and maximal iterated Lie bracket length $M$.
The code is available in the GitLab repository~\cite{BCHD_Code}, although practical computations for large $n$, $N$, and $M$ may be constrained by available computational resources.}
\end{remark}

Although a test for periodic solutions of the nonlinear autonomous system~\eqref{autonomous} remains a challenging problem in general and can be approached using the Poincar\'e map~\cite[Chapter~6]{robinson2012introduction}, contraction analysis~\cite{lohmiller1998contraction,tsukamoto2021contraction}, Hopf bifurcation theory, and normal forms~\cite{han2012normal,grushkovskaya2013asymptotic},
we outline below a particular corollary of Theorem~\ref{thm1}
{that follows from the Bendixson–Dulac theorem for planar systems~\cite[Section~3.9]{perko2013differential}}.

{
\begin{corollary}\label{cor1}
Assume that $n=2$, and let the vector fields $f_1, ..., f_N: D\to {\mathbb R}^2$ be analytic in a simply connected domain $D\subset {\mathbb R}^2$.
Let $\tau>0$ be sufficiently small so that the series of nested commutators defining the vector field $F:D\to {\mathbb R}^2$ in Theorem~\ref{thm1} converges.
Suppose, moreover, that there exists a function $\rho\in C^1(D;{\mathbb R})$ such that
\begin{equation}\label{divcond}
\textrm{div} \bigl(\rho(x) F(x)\bigr)  >0\;\text{for almost all}\; x\in D.
\end{equation}
Then, the existence of a solution $x(t)\in D$ of~\eqref{cs_driftless} satisfying $x(0)=x(\tau)$ is equivalent to the condition $F(x(0))=0$.
\end{corollary}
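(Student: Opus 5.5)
Via Theorem~\ref{thm1}, I would reduce the claim to a statement about the planar autonomous system~\eqref{autonomous}, and then use the Bendixson--Dulac theorem to exclude nonconstant closed orbits. Fix $\tau>0$ small enough that Theorem~\ref{thm1} applies and the series defining $F$ converges. For a given $x^0\in D$, condition~(i) is then equivalent to condition~(ii). It therefore suffices to show that the solution $\tilde x(t)$ of $\dot{\tilde x}=F(\tilde x)$, $\tilde x(0)=x^0$, satisfies $\tilde x(1)=x^0$ if and only if $F(x^0)=0$.

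The direction ``$\Leftarrow$'' is immediate. If $F(x^0)=0$, then $\tilde x(t)\equiv x^0$ by uniqueness, since $F$ is analytic and hence locally Lipschitz. So $\tilde x(1)=\tilde x(0)$, and by Theorem~\ref{thm1} the solution $x(t)$ of~\eqref{cs_driftless} with $x(0)=x^0$ satisfies $x(\tau)=x(0)$. Moreover, $x(t)\in D$ for $t\in[0,\tau]$, because the solution exists there by the same fixed-point/flow representation.

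For ``$\Rightarrow$'', suppose $x(0)=x(\tau)$. Then $\tilde x(1)=x^0$, so $\tilde x$ extends to a $1$-periodic solution of the autonomous system. Assume, for contradiction, that $F(x^0)\neq 0$. By uniqueness, $F$ does not vanish along the orbit, so the orbit $\Gamma=\{\tilde x(t): t\in[0,1]\}\subset D$ is a nonconstant periodic orbit. Because solutions of an autonomous planar system cannot cross themselves, $\Gamma$ is a simple closed $C^1$ curve. Since $D$ is simply connected, the bounded interior region $\Omega$ enclosed by $\Gamma$ (Jordan curve theorem) lies in $D$. Apply Green's theorem to $\rho F$ on $\Omega$:
\[
\iint_\Omega \mathrm{div}\bigl(\rho F\bigr)\,dx = \oint_\Gamma \rho\,(F_1\,dx_2 - F_2\,dx_1) = \pm\int_0^1 \rho(\tilde x(t))\bigl(F_1F_2-F_2F_1\bigr)(\tilde x(t))\,dt = 0.
\]
The left-hand side is strictly positive by~\eqref{divcond}. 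The condition holds almost everywhere and $\Omega$ has positive measure, so the integral of an a.e.\ positive function is positive. This contradiction forces $F(x^0)=0$.

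The main obstacle is the regularity needed for the Dulac argument. We need $\rho F\in C^1(D)$ so that Green's theorem applies. This holds because $\rho\in C^1$ and $F$ is analytic on $D$ for small $\tau$; analyticity follows from convergence of the BCHD series to an analytic field, which I would take from the convergence results cited in the proof of Theorem~\ref{thm1}. A secondary point is that the ``almost all'' hypothesis suffices: strict positivity a.e.\ on the open set $\Omega$ gives a positive integral, so the standard everywhere-positive version is not needed. I would also remark that the equivalence holds pointwise for each $x^0$, with $\tau$ chosen uniformly as in Theorem~\ref{thm1}.
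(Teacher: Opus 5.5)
Your proposal is correct and follows the route the paper indicates. Theorem~\ref{thm1} reduces $\tau$-periodicity of~\eqref{cs_driftless} to $\tilde x(1)=\tilde x(0)$ for $\dot{\tilde x}=F(\tilde x)$, and the Bendixson--Dulac criterion, which the paper simply cites from Perko, then rules out nonconstant closed orbits in the simply connected planar domain $D$, so only equilibria remain. Your explicit Green's-theorem derivation of the Dulac step goes a little further than the paper, since it also shows that the ``almost all'' form of~\eqref{divcond} is enough, which the paper does not spell out.
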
}

{
For an arbitrary state space dimension $n$, we further characterize conditions under which the study of periodic trajectories of~\eqref{autonomous} can be reduced to the analysis of its equilibria by extending the ideas of contraction analysis~\cite{lohmiller1998contraction,lohmiller2000nonlinear} and their generalizations in the sense of incremental stability~\cite{angeli2002lyapunov,forni2013differential}.}

{Following the incremental stability approach on manifolds~\cite{forni2013differential}, we consider system~\eqref{autonomous} together with its variational dynamics as:
\begin{eqnarray}\label{xFx}
&\dot x=F(x),\\
&\dot{\delta x}
=
\frac{\partial F(x)}{\partial x}\delta x.
\label{eq:prolonged}
\end{eqnarray}
The prolonged system~\eqref{xFx}--\eqref{eq:prolonged} evolves on the tangent bundle $TD\simeq D\times\mathbb R^n\subset\mathbb R^{2n}$.
Let $\|\delta x\|$ denote the Euclidean norm of $\delta x\in {\mathbb R}^n$,
then a candidate Finsler--Lyapunov function for~\eqref{xFx}--\eqref{eq:prolonged}, in the sense of~\cite{forni2013differential}, is a function $V:D\times{\mathbb R^n}\to {\mathbb R}_{\ge 0}$ satisfying
\begin{equation}\label{Vfun}
b_1 \|\delta x\|^p \le V(x,\delta x) \le b_2 \|\delta x\|^p,\;\forall (x,\delta x)\in D\times\mathbb R^n,
\end{equation}
where $b_1>0$, $b_2>0$, and $p\ge 1$ are constants.
}

{By Proposition~2 of~\cite{forni2013differential}, if $D_0\subset D$ is an invariant set for~\eqref{xFx} and there exists a Lyapunov function
satisfying
\begin{equation}
\dot V_{\eqref{xFx}}(x,\delta x):=\frac{\partial V}{\partial x}F(x)
+
\frac{\partial V}{\partial \delta x}
\frac{\partial F}{\partial x}\delta x
\le
-\theta(V(x,\delta x)),
\label{V-decay}
\end{equation}
for all $(x,\delta x)\in D_0\times {\mathbb R^n}$,
then system~\eqref{xFx} admits no nonconstant periodic trajectories in $D_0$. Combining this result with Theorem~\ref{thm1} yields the following important corollary.}

{
\begin{corollary}\label{cor2}
Let the assumptions of Theorem~\ref{thm1} hold, and let $D_0\subset D$ be a forward invariant set for~\eqref{xFx}, where the vector field $F$  defined in Theorem~\ref{thm1}.
If there exist functions $V\in C^1(D\times{\mathbb R^n}; {\mathbb R}_{\ge 0})$, $\theta \in\mathcal K$ and constants $b_1,b_2>0$, $p\ge 1$
such that inequalities~\eqref{Vfun}--\eqref{V-decay} hold, then every $\tau$-periodic solution $x(t)\in D_0$ of~\eqref{cs_driftless} satisfies
\begin{equation}\label{equil}
F(x(0))=0.
\end{equation}
\end{corollary}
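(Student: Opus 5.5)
Take an arbitrary $\tau$-periodic solution $x(t)$ of~\eqref{cs_driftless} with $x(0)=x^0$ and $x(t)\in D_0$, and use Theorem~\ref{thm1} to pass to the autonomous problem~\eqref{autonomous}. For $\tau$ small enough the implication (i)$\Rightarrow$(ii) applies. Hence the solution $\tilde x(t)=e^{tF}x^0$ of $\dot{\tilde x}=F(\tilde x)$ satisfies $\tilde x(1)=\tilde x(0)=x^0$. Because~\eqref{xFx} is autonomous, $\tilde x$ extends to all $t\in\mathbb R$ as a $1$-periodic trajectory of~\eqref{xFx}: we have $\tilde x(t+1)=e^{tF}\tilde x(1)=e^{tF}x^0=\tilde x(t)$. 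Since $x^0\in D_0$ and $D_0$ is forward invariant for~\eqref{xFx}, the forward orbit $\{\tilde x(t):t\ge0\}$ lies in $D_0$. By periodicity this forward orbit is the whole orbit, so the entire periodic trajectory stays in $D_0$.

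Next, use the incremental stability hypothesis. The hypotheses~\eqref{Vfun}--\eqref{V-decay} on $D_0\times\mathbb R^n$ with $\theta\in\mathcal K$ are exactly those of Proposition~2 of~\cite{forni2013differential}, as recalled before the statement. Therefore~\eqref{xFx} has no nonconstant periodic trajectories in $D_0$. It follows that $\tilde x$ is constant, $\tilde x(t)\equiv x^0$, and hence $F(x^0)=\dot{\tilde x}(0)=0$, which is~\eqref{equil}.

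If one prefers not to rely on the cited proposition, the step I expect to be most delicate is proving directly that a periodic orbit in $D_0$ is constant. My plan is to apply the variational equation~\eqref{eq:prolonged} along $\tilde x$ with $\delta x(t)=\dot{\tilde x}(t)=F(\tilde x(t))$. Differentiating $\dot{\tilde x}=F(\tilde x)$ shows that this is a solution of~\eqref{eq:prolonged}. It is $1$-periodic, so $v(t)=V(\tilde x(t),\delta x(t))$ is $1$-periodic. On the other hand,~\eqref{V-decay} gives $\dot v\le-\theta(v)\le 0$. Then $v$ is nonincreasing and periodic, hence constant, which forces $\theta(v)\le 0$ and therefore $v\equiv0$. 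By~\eqref{Vfun} this gives $\|F(\tilde x(t))\|^p\le v/b_1=0$, so $F(x^0)=0$.

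The only technical points to check are the smallness of $\tau$, so that Theorem~\ref{thm1} applies and $e^{F}$ is well defined, and the $C^1$ regularity of $F$, so that the variational system makes sense. Both follow from the analyticity and convergence assumptions inherited from Theorem~\ref{thm1}.
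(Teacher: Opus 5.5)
Your proposal is correct and follows the paper's route. Theorem~\ref{thm1} turns the $\tau$-periodic solution into a $1$-periodic orbit of $\dot x=F(x)$ through $x^0$, forward invariance of $D_0$ keeps that orbit (which differs from $x(t)$ itself) inside $D_0$, and Proposition~2 of~\cite{forni2013differential} forces it to be constant, which gives $F(x(0))=0$. Your optional direct argument is also sound and makes the cited step self-contained: you take $\delta x(t)=F(\tilde x(t))$ as a periodic solution of~\eqref{eq:prolonged}, so that $v=V(\tilde x,\delta x)$ is periodic and nonincreasing, hence $\theta(v)\le 0$ and $v\equiv 0$, and then $F\equiv 0$ along the orbit by~\eqref{Vfun}.
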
}

Under additional assumptions, we prove the uniqueness of {$\tau$-periodic solutions of system~\eqref{cs_affine} for any admissible periodic control.
Let $u^*\in L^\infty\left([0,+\infty);U\right)$ be $\tau$-periodic in the sense that $u^*(t+\tau)=u^*(t)$ for almost all $t\ge 0$, then we consider system~\eqref{cs_affine} with $u=u^*(t)$:
\begin{equation}\label{sys_discontinuous}
\dot x = f(t,x),\quad f(t,x):= f_0(x) + \sum_{j=1}^m u^*_j(t) g_j(x).
\end{equation}
If the vector fields $f_0$, $g_1$, ..., $g_m$ are of class $C^1(D)$,
then for any $t_0\ge 0$ and $x^0\in D$, there exists a unique maximal Carathéodory solution $x(t)=\varphi_t(t_0,x^0)$ of~\eqref{sys_discontinuous} satisfying $x(t_0)=x^0$~\cite[Chap.~1]{Filippov},
and we focus on describing the properties of the non-autonomous flow
 $\varphi_t$ in terms of incremental stability.
For this purpose, we define the distance between any two points $x^1, x^2\in D$ by
$d(x^1,x^2):=\inf_{\gamma\in\Gamma(x^1,x^2)} \int_0^1 \|\dot \gamma (s)\|ds$, where
$\Gamma(x^1,x^2) :=  \left\{ \gamma\in C^1([0,1];D)\;|\; \gamma(0)=x^1,\; \gamma (1) = x^2\right\}$.
This definition allows us to consider distances in arbitrary nonconvex domains, and $d(x^1,x^2)=\|x^1-x^2\|$ whenever $D$ is convex.
According to~\cite{forni2013differential}, system~\eqref{sys_discontinuous} is {\em incrementally asymptotically stable (IAS)} on a
forward invariant set $D_0\subset D$ if there exists a function $c\in\mathcal K$ such that, for all $x^1,x^2\in D_0$ and all $t\ge t_0\ge 0$,
\begin{eqnarray}\label{IAS}
&d(\varphi_t(t_0,x^1),\varphi_t(t_0,x^2))\le c (d(x^1,x^2)),\\
&\lim_{t\to+\infty }d(\varphi_t(t_0,x^1),\varphi_t(t_0,x^2)) =0.\label{IAS_conv}
\end{eqnarray}
}
{
\begin{theorem}\label{thm2}
Assume that $f_0,g_1,...,g_m\in C^2(D)$,
$u^*\in L^\infty \left([0,+\infty);U\right)$ is a $\tau$-periodic control,
and that there exist functions $V\in C^1(D\times{\mathbb R^n}; {\mathbb R}_{\ge 0}) $, $\theta\in\mathcal K$ satisfying~\eqref{Vfun} and
\begin{equation}\label{Vdot}
\dot V_{\eqref{sys_discontinuous}}(x,\delta x,t) :=
\frac{\partial V}{\partial x}f
+
\frac{\partial V}{\partial \delta x}
\frac{\partial f}{\partial x}\delta x
\le
-\theta(V(x,\delta x)),
\end{equation}
for all $(x,\delta x,t)\in D\times{\mathbb R}^n\times {\mathbb R}_{\ge 0}$.
Let $D_0\subset D$ be a closed domain, and let $x:[0,+\infty)\to D_0$ be any Carathéodory solution of~\eqref{sys_discontinuous}.
Then $x(t)$ converges to the unique $\tau$-periodic solution $x^*(t)$ of~\eqref{sys_discontinuous} in $D_0$ as $t\to+\infty$.
\end{theorem}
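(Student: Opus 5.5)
The plan is to combine the Finsler--Lyapunov decay \eqref{Vdot} with a contraction argument on the period map. This goes in four steps.

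\textbf{Step 1 (incremental contraction along curves).} Take $x^1,x^2\in D_0$ and a curve $\gamma\in\Gamma(x^1,x^2)$. Transport it by the flow: $\gamma_t(s):=\varphi_t(t_0,\gamma(s))$. Since $f_0,g_j\in C^2(D)$, the Carath\'eodory flow is $C^1$ in the initial state. The derivative $\delta x(t)=\partial_s\gamma_t(s)$ satisfies the variational equation $\dot{\delta x}=\frac{\partial f}{\partial x}(t,x)\delta x$ for almost all $t$.

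Along each such pair $(x(t),\delta x(t))$, the function $t\mapsto V$ is absolutely continuous. By \eqref{Vdot} we get $\frac{d}{dt}V\le-\theta(V)$ almost everywhere. The comparison lemma then gives $V(x(t),\delta x(t))\le\beta(V(x(t_0),\delta x(t_0)),t-t_0)$ for some $\beta\in\mathcal{KL}$.

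Using \eqref{Vfun}, this yields
\[
\|\delta x(t)\|\le \bigl(b_1^{-1}\beta(b_2\|\delta x(t_0)\|^p,t-t_0)\bigr)^{1/p}.
\]
Integrating in $s$ and taking the infimum over $\gamma$ gives \eqref{IAS}--\eqref{IAS_conv}, i.e.\ IAS. The convergence in \eqref{IAS_conv} is uniform in the initial pair when $d(x^1,x^2)$ is bounded.

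I expect the main obstacle to be exactly here. First, $\beta$ need not be linear in its first argument, so I will work with the scaled quantity $\|\delta x\|$ per unit parameter and use that $\theta$ is class $\mathcal K$. Second, the transported curves must stay in $D$; I will use $D_0$ being closed and forward invariant, as implicit in the hypothesis that solutions lie in $D_0$.

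\textbf{Step 2 (period map is eventually a contraction).} Let $P:=\varphi_\tau(0,\cdot)$. By $\tau$-periodicity of $u^*$, we have $\varphi_{t+\tau}(\tau,\cdot)=\varphi_t(0,\cdot)$, so $\varphi_{k\tau}(0,\cdot)=P^k$. Consider any trajectory $x(t)\in D_0$. Step 1 applied to $x^1=x(0)$ and $x^2=x(\tau)=P(x(0))$ gives $d(P^k x(0),P^{k+1}x(0))\to0$.

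To obtain a Cauchy sequence rather than merely consecutive closeness, I will choose $k_0$ with $d(P^{k_0}y,P^{k_0}z)\le\frac12 d(y,z)$ on bounded distance sets. This follows from the uniform $\mathcal{KL}$ estimate in Step 1, where one exploits that $\theta$ gives, near zero, at least a fixed fraction of decay per period on differential norms. Then $P^{k_0}$ is a contraction along the orbit, so $\{P^k x(0)\}$ is Cauchy. Since $D_0$ is closed, it converges to some $x^*_0\in D_0$.

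\textbf{Step 3 (existence of the periodic solution).} Continuity of $P$ gives $P(x^*_0)=x^*_0$. Hence $x^*(t):=\varphi_t(0,x^*_0)$ is a $\tau$-periodic Carath\'eodory solution lying in $D_0$.

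\textbf{Step 4 (uniqueness and convergence).} For uniqueness, suppose $y^*$ is another $\tau$-periodic solution in $D_0$. Then \eqref{IAS_conv} gives $d(x^*(t),y^*(t))\to0$. But this distance is $\tau$-periodic in $t$, so it must vanish identically.

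For convergence, apply \eqref{IAS_conv} to $x(t)$ and $x^*(t)$ to get $d(x(t),x^*(t))\to0$. Finally, $\|x(t)-x^*(t)\|\le d(x(t),x^*(t))$, which gives convergence in the Euclidean norm.
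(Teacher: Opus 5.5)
Your overall architecture is sound, and at the Cauchy step it is more careful than the paper's. However, the justification you give for the key contraction in Step~2 does not work.

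\textbf{The gap in Step~2.} A $\mathcal{KL}$ bound on distances alone never gives a uniform ratio. For example, $\tilde\beta(r,T)=r/(1+rT)$ is of class $\mathcal{KL}$, yet $\tilde\beta(r,T)/r\to1$ as $r\to0$ for every $T$. Similarly, $\theta\in\mathcal K$ may behave like $r^2$ near $0$ (e.g.\ $\theta(r)=\min(r,r^2)$), so there is no ``fixed fraction of decay near zero'' to exploit. Restricting to bounded-distance sets does not help, since the failure occurs at small $r$.

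\textbf{The missing idea.} The variational equation is \emph{linear} in $\delta x$, so you may normalize the tangent vector. Take $\|\delta x(t_0)\|=1$ and set $v(t)=V(x(t),\Phi(t,t_0)\delta x(t_0))$.
\begin{itemize}
\item $v$ is nonincreasing with $v(t_0)\le b_2$, and $\dot v\le-\theta(\epsilon)$ a.e.\ as long as $v\ge\epsilon$.
\item Choose $\epsilon=b_12^{-p}$ and $T_*=b_2/\theta(\epsilon)$. Then $v(t_0+T)<\epsilon$, hence $\|\Phi(t_0+T,t_0)\|\le\frac12$, for all $T\ge T_*$. This holds uniformly in $t_0$ and in the base trajectory.
\item With $k_0\tau\ge T_*$, every transported curve has at most half the length of the original. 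Hence $d(P^{k_0}y,P^{k_0}z)\le\frac12 d(y,z)$ globally, with no boundedness needed.
\end{itemize}
With this replacement, your Steps~2--4 go through. The comparison-lemma machinery of Step~1 also becomes unnecessary, since monotonicity of $v$ already gives $\|\Phi\|\le(b_2/b_1)^{1/p}$.

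\textbf{Comparison with the paper.} The paper obtains incremental stability essentially as in your Step~1. It then asserts that the shifts $x(t+k\tau)$ are Cauchy directly from \eqref{IAS_conv}, and identifies the limit as a Carath\'eodory solution via Filippov's limit lemma. You correctly noticed that \eqref{IAS_conv} only gives $d(x(t+k\tau),x(t+(k+j)\tau))\to0$ for each fixed $j$, not uniformly in $j$. The uniform contraction above is exactly what closes that step, in your argument and in the paper's. Your fixed point of $P$ replaces the limit lemma by mere continuity of $P$. Uniqueness and convergence are argued identically in both.

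\textbf{A remaining caveat.} Forward invariance of $D_0$ is \emph{not} among the hypotheses. Even if it held, curves in $D$ joining points of $D_0$ may leave the region where the flow exists. The paper glosses over this the same way (``for all $t$ for which the solutions are defined''). You should therefore state an explicit extra assumption rather than call it implicit, for example that $D$ is forward invariant, or that $D_0$ is convex and forward invariant.
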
}

\begin{proof}
{
For any $x^1,x^2\in D$ and a $C^1$-curve $\gamma\in \Gamma(x^1,x^2)$,
consider the function $$v(t,s):=V(\varphi_t(t_0,\gamma(s)),\frac{\partial}{\partial s}\varphi_t(t_0,\gamma(s))).$$
For any fixed $s\in [0,1]$, the mapping $t\mapsto v(t,s)$ belongs to $AC({\mathcal I})$, where $\mathcal I$ is the interval on which the flow is defined,
since the solutions are of Carathéodory type~\cite[Chap.~1]{Filippov}.
We then evaluate the time-derivative of $v$ for almost all $t\in \mathcal I$:
\begin{equation}\label{vt}
\begin{aligned}
&\frac{\partial v(t,s)}{\partial t} = \frac{\partial V}{\partial x} f(t,\varphi_t(t_0,\gamma(s))) + \frac{\partial V}{\partial \delta x} \frac{\partial^2}{\partial t\partial s}\varphi_t(t_0,\gamma(s))\\
& = \dot V_{\eqref{sys_discontinuous}}(\varphi_t(t_0,\gamma(s)),\frac{\partial}{\partial s}\varphi_t(t_0,\gamma(s)),t)\le -\theta(v(t,s)).
\end{aligned}
\end{equation}
Here, the partial derivatives of $V$ are evaluated at $x=\varphi_t(t_0,\gamma(s))$ and $\delta x = \frac{\partial}{\partial s}\varphi_t(t_0,\gamma(s))$,
and the last inequality follows from~\eqref{Vdot}.
Integrating~\eqref{vt} yields $v(t,s)\le v(t_0,s)$ for all $t\in \mathcal I\cap [t_0,+\infty)$.
As in the proof of~\cite[Thm.~1]{forni2013differential}, we estimate $d(\varphi_t(t_0,x^1),\varphi_t(t_0,x^2))$ in terms of
$\int_0^1 v(t,s)ds$ under assumption~\eqref{Vfun}, provided that $\gamma$ is chosen as an appropriate $\epsilon$-minimizer.
This implies~\eqref{IAS} for all $t\ge t_0$ for which the corresponding solutions are defined,
and also~\eqref{IAS_conv} if both $\varphi_t(t_0,x^j)$ are defined for all $t\in [t_0,+\infty)$.}

{Let $x(t)\in D_0$ be a Carathéodory solution of system~\eqref{sys_discontinuous} for $t\ge 0$}. Consider the sequence of its shifts { -- solutions $x^{(k)}(t) = x(t + k\tau)\in D_0$}, defined for all $k\in {\mathbb N}_0$ and $t\ge 0$.
{By~\eqref{IAS_conv}, applied to the shifted solutions $x^{(k)}$ and $x^{(l)}$, the sequence $\{x(t+k\tau)\}_{k\in{\mathbb N}_0}$ is Cauchy
for every fixed $t\ge 0$.
Since $D_0$ is closed and therefore complete, the limit}
\begin{equation}\label{x_lim}
x^*(t) := \lim_{k\to+\infty} x^{(k)}(t)
\end{equation}
{
exists and belongs to $D_0$.
Furthermore, $x^*(t)$ is $\tau$-periodic because of its construction in~\eqref{x_lim},
and $x^*(t)$ is a Carathéodory solution of~\eqref{sys_discontinuous} as the limit of Carathéodory solutions~\cite[Lemma~3]{Filippov}.
Moreover,~\eqref{IAS_conv} implies $\|x(t)-x^*(t)\|\to 0$ as $t\to+\infty$.
Finally, $x^*(t)$ is the unique periodic solution of~\eqref{sys_discontinuous} in $D_0$;
otherwise,~\eqref{IAS_conv} would be violated.}
\end{proof}



\begin{remark}\label{rem3}{
A computationally attractive design of candidate Lyapunov functions for Corollary~\ref{cor2} and Theorem~\ref{thm2} is based on quadratic forms (cf.~\cite{forni2013differential}):
\begin{equation}\label{Vquad}
V(x,\delta x)= {\delta x}^\top Q \delta x,\; Q=Q^\top \succ 0.
\end{equation}
By evaluating $\dot V$ and considering linear functions $\theta\in \cal K$, we conclude that~\eqref{Vquad} satisfies~\eqref{Vfun}--\eqref{V-decay} if
a constant $n\times n$ matrix $Q\succ 0$ and a scalar $\beta>0$ satisfy the following Lyapunov-type linear matrix inequality (LMI):
\begin{equation}\label{Lyapunov_LMI}
Q\frac{\partial F(x)}{\partial x} + \left( \frac{\partial F(x)}{\partial x} \right)^\top  Q \preceq -\beta I,
\end{equation}
for all $x\in D_0$,
where $I$ is the identity matrix,
the relations ``$\succ 0$'' and ``$\preceq 0$'' denote positive-definite and negative-semidefinite matrices, respectively.
The same LMI~\eqref{Lyapunov_LMI}, with the Jacobian of $F(x)$ replaced by $\frac{\partial f(t,x)}{\partial x}$ for all $(t,x)\in {\mathbb R}_{\ge 0} \times D$, can be used to ensure condition~\eqref{Vdot} of Theorem~\eqref{thm2}. In the particular case where~\eqref{sys_discontinuous} coincides with  system~\eqref{cs_driftless} and the vector fields $g_1$, ..., $g_m$ are constant, we have $\frac{\partial f(t,x)}{\partial x} \equiv \frac{\partial f_0(x)}{\partial x}$ for all $t\ge 0$.
Hence, the conditions of Theorem~\ref{thm2} are guaranteed in this case by
\begin{equation}\label{LMI_f0}
Q\frac{\partial f_0(x)}{\partial x} + \left( \frac{\partial f_0(x)}{\partial x} \right)^\top  Q \preceq -\beta I.
\end{equation}}
In the considered case, condition~\eqref{LMI_f0} implies that the ${\cal O}(\tau)$ Taylor approximation of the vector field $F$ in~\eqref{F_general} has a uniformly Hurwitz Jacobian in $D_0$. Therefore, if $F^{(1)}(x)=\alpha_1 f_1(x) + ... + \alpha_N f_N(x)$ vanishes at some interior point $x^0$ of $D_0$, then the equilibrium $\tilde x= x^0$ of the autonomous system $\dot {\tilde x}(t) = \tau F^{(1)}(\tilde x(t))$ is exponentially stable in~$D_0$~\cite[Thm.~2]{lohmiller1998contraction}.
Although a complete stability analysis of the autonomous system in~\eqref{autonomous} from Theorem~\ref{thm1} requires considering all higher-order terms in the vector field~$F$, the above observation suggests {testing Corollary~\ref{cor2} and Theorem~\ref{thm2} with a quadratic Lyapunov candidate  obtained from~\eqref{LMI_f0}.}
\end{remark}

\section{CASE STUDIES}\label{sec_example}
\subsection{Reaction of the type ``$A\to$ Product'' }
In this first example, we apply the proposed description of periodic trajectories to a mathematical model of a nonisothermal chemical reaction.
A reaction of the type ``$A\to$ Product'' of order $\bar n$, considered in~\cite{CES2017,AMM2019}, is described by the following nonlinear control-affine system:
\begin{equation}\label{CSTR_affine}
\small
\dot x = f_0(x) + u_1 g_1(x) + u_2 g_2(x),\,x=\begin{pmatrix}x_1 \\ x_2\end{pmatrix}\in D, u=\begin{pmatrix}u_1 \\ u_2\end{pmatrix}\in U,
\end{equation}
where $
D = (-1,+\infty)^2
$,
$
U = [u_1^{\min},u_1^{\max}] \times  [u_2^{\min},u_2^{\max}]
$,
$$
\begin{aligned}
f_0(x)&=\begin{pmatrix}
- \phi_1 x_1 + k_1 e^{-\varkappa} - (x_1+1)^{\bar n} e^{-\varkappa/(x_2+1)}\\
- \phi_2 x_2 + k_2 e^{-\varkappa} - (x_1+1)^{\bar n} e^{-\varkappa/(x_2+1)}
\end{pmatrix},\\
g_1(x)&=(1, 0)^\top,\; g_2(x)=(0,1)^\top.
\end{aligned}
$$
In this control system, $x_1$ and $x_2$ describe deviations of the outlet concentration of $A$ and the outlet temperature, respectively, from their steady-state operation values
in a continuously stirred tank reactor (CSTR).
The controls $u_1$ and $u_2$ denote deviations of the inlet concentration of $A$ and the inlet temperature from their steady-state values, respectively.
The state variables, control inputs, and time in~\eqref{CSTR_affine} are normalized to be dimensionless, and the parameters of the vector field $f_0$ are also dimensionless (see~\cite{CES2017} for details on the derivation of equations~\eqref{CSTR_affine}).
The constraint $x_1>-1$ ensures the positivity of the physical concentration, while $x_2>-1$ ensures the positivity of the absolute temperature.
We choose the parameters as in~\cite{AMM2019}:
${\bar n}=1$, $\phi_1=\phi_2=1$,
$k_1=5.819\cdot 10^7$, $k_2=-8.99\cdot 10^5$, $\varkappa=17.77$, $u_1^{\max}=-u_1^{\min} = 1.798$, $u_2^{\max}=-u_2^{\min}=0.06663$.


\begin{figure}[t]\label{BCH4}
    \begin{subfigure}[b]{0.48\columnwidth}
        \includegraphics[width=0.9\columnwidth]{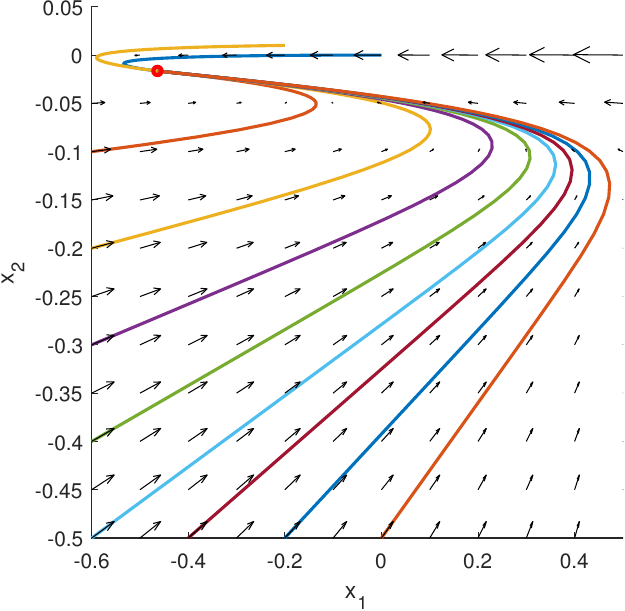}
        \caption{Phase portrait of~\eqref{FM_system}}
        \label{fig:right}
    \end{subfigure}
       \hfill
        \begin{subfigure}[b]{0.5\columnwidth}
            \includegraphics[width=\columnwidth]{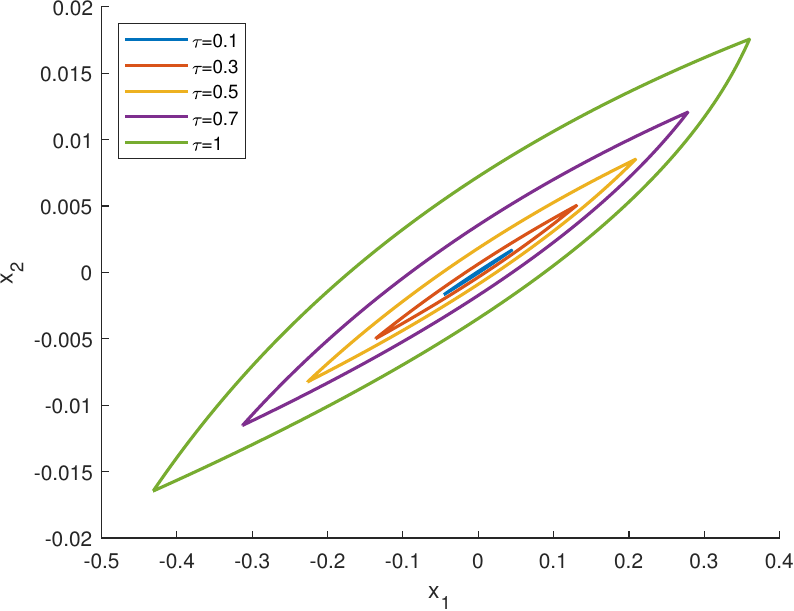}
        \caption{Trajectories of~\eqref{CSTR_affine}}
        \label{fig:left}
    \end{subfigure}
\caption{Trajectories of systems~\eqref{CSTR_affine} and~\eqref{FM_system}, $N=2$.}
\end{figure}

The case $N=2$ is reported in~\cite{AMM2019} to be locally optimal for the optimization of the performance of the considered chemical reaction model.
Therefore, we focus on system~\eqref{cs_driftless} with the vector fields $f_1, f_2: D\to{\mathbb R}^2$ defined by~\eqref{f_k} with $N=2$, i.e.
$
f_1 = f_0 + u_1^{(1)} g_1 + u_2^{(1)} g_2$,
$
f_2 = f_0 + u_1^{(2)} g_1 + u_2^{(2)} g_2
$,
where the control values $u^{(1)}$ and $u_2^{(2)}$ belong to the boundary $\partial U$ of $U$.
For the given vector fields of~\eqref{CSTR_affine}, the corresponding vector field $
F = {\tilde F}_M + {\mathcal O}(\tau^{M+1})
$
in Theorem~\ref{thm1} is derived using \texttt{Maple} {\cite{BCHD_Code},
and the following ``BCHD-approximate system'' is constructed:
\begin{equation}\label{FM_system}
\dot x = {\tilde F}_M ( x) =\sum_{k=1}^M \tau^k F^{(k)}(x),\; x\in D.
\end{equation}
}
According to Theorem~\ref{thm1}, the $\tau$-periodic solutions of system~\eqref{CSTR_affine} with bang-bang controls $u=u_\tau(t)$ of the form~\eqref{u_bangbang}
are determined by $1$-periodic solutions of~\eqref{autonomous}.
Under the above choice of kinetic parameters and control parameters
\begin{equation}\label{control_parameters}
\alpha_1 = \alpha_2 = \frac12,\; u^{(1)}_j = u_j^{\max},\; u^{(2)}_j = u_j^{\min},\; j=1,2,
\end{equation}
the phase portrait of system~\eqref{FM_system} has been analyzed numerically for $M=4$ using \texttt{MATLAB R2025a}.
We observe that system~\eqref{FM_system} possesses an attracting point
$x^*\in D$, which is depicted by a red circle in Fig.~1(a).

The considered solutions $\tilde x(t)$ of system~\eqref{FM_system} converge to the attractor $x^*$ for large $t$, and the coordinates of $x^*$ are evaluated numerically as zeros of ${\tilde F}_M$ for different $M$.
These computations are summarized in Table~1 for $\tau=1$ and $M\le 4$.
\begin{table}[h]\label{tab1}
\begin{center}
\renewcommand{\arraystretch}{1.4}
\begin{tabular}{|c|m{4cm}|c|}
\hline
$M$ &  $F^{(k)}$ in~\eqref{FM_system} & $(x^*_1,x^*_2)$ \\[2pt] \hline
1 & $\alpha_1 f_1+\alpha_2 f_2$ &$(0, 0)$ \\[2pt] \hline
2 & $\frac{\alpha_1\alpha_2}{2}[f_1,f_2]$& $(-0.3651, -0.01796)$\\[2pt] \hline
3 & $\frac{\alpha_1 \alpha_2 ( \alpha_1[f_1,[f_1,f_2]] - \alpha_2 [f_2,[f_1,f_2]])}{12}$ &$(-0.4638, -0.01644)$ \\[2pt] \hline
4 & $- \frac{\alpha_1^2 \alpha_2^2}{24}{} [f_2,[f_1,[f_1,f_2]]]$ & $(-0.4384, -0.01634)$ \\[2pt] \hline
\end{tabular}\\
\vskip0.5em
Table~1. Equilibrium $x^*=(x^*_1,x^*_2)^\top$ of system~\eqref{FM_system} for different $M$.
\end{center}
\end{table}
We have also tested the sign of the divergence of ${\tilde F}_M(x)$ numerically in~\eqref{FM_system} and observed that
$
\textrm{div}\, {\tilde F}_M(x) <0
$
for all $x\in D$: $|x_1|<0.999$, $|x_2|<0.999$ and all $M=2,3,4$.
Thus, according to {Corollary~\ref{cor1} with $\rho(x)\equiv -1$},
the characterization of periodic trajectories of~\eqref{CSTR_affine} in terms of the equilibrium of~\eqref{FM_system} is well confirmed in our case study.

To compare these results with other approaches, we compute the initial data $x^*$ for the $\tau$-periodic solution of system~\eqref{CSTR_affine} with controls~\eqref{u_bangbang},~\eqref{control_parameters},
obtained after $10$ iterations of the modified Newton's method from~\cite{IEEE2025} with $N=2$.
As a result, we obtain:
$
x^*\approx (-0.4314,-0.01646)^\top
$.
By comparing this $x^*$ with the rows of Table~1, we observe that the equilibrium of system~\eqref{FM_system} for $M=4$ already provides an acceptable approximation of the initial data for the corresponding periodic solution of system~\eqref{CSTR_affine}.
Periodic trajectories of system~\eqref{CSTR_affine} with controls~\eqref{u_bangbang}, corresponding to the periods $\tau=0.1$, $0.3$, $0.5$, $0.7$, $1$, are shown in Fig.~1(b).

{
To illustrate the computational consistency of the proposed approach for a higher number of switchings,
Fig.~2(a) shows the trajectories of~\eqref{CSTR_affine} with periods $\tau=0.1, 0.5, 1$ and controls~\eqref{u_bangbang} with $N=3$ and $\alpha_1=\alpha_2=\alpha_3=1/3$,
where the initial value $x^0$ is computed as the zero of the truncated expansion of $F$ up to order $M=4$ from Theorem~\ref{thm1}.}


\subsection{Reaction of the type ``$A\to B\to Product$''}
A controlled nonisothermal chemical reaction of the type ``$A\to B\to Product$'' is described by (cf.~\cite{fogler2022elements}):
\begin{equation}\label{CSTR3_affine}
\dot x = f_0(x) + u_1 g_1(x) + u_2 g_2(x),\; x\in {\mathbb R}^3, u\in {\mathbb R}^2,
\end{equation}
where $x_1$ and $x_2$ denote the concentrations of $A$ and $B$, respectively,
$x_3$ is the temperature, and the vector fields are $g_1(x) = \left({F}/{V},0,0\right)^\top$, $g_2(x) = \left(0,0,{F}/{V} \right)^\top$,
$$
f_0(x) =  - \begin{pmatrix}
\left( \frac{F}{V} + k_1^0 e^{-\frac{E_1}{Rx_3}} \right) x_1\\
(\frac{F}{V} + k_2^0 e^{-\frac{E_2}{Rx_3}})x_2 - k_1^0 x_1 e^{-\frac{E_1}{Rx_3}} \\
\frac{F x_3}{V}  +  \frac{\Delta H_1 k_1^0 x_1}{\rho C_p} e^{-\frac{E_1}{Rx_3}} + \frac{\Delta H_2 k_2^0 x_2}{\rho C_p} e^{-\frac{E_2}{Rx_3}}
\end{pmatrix}.
$$
The reaction is assumed to be adiabatic and is controlled by $u_1$ (the inlet concentration of $A$) and $u_2$ (the inlet temperature) in a neighborhood of their reference values $\bar u_1$ and $\bar u_2$.
We choose the following realistic parameters for this system~(cf.~\cite[Example~2.5]{seborg2016process}):
$F=100$,
$V=100$, $R = 8.314$,
$k_1^0 = 7.2\cdot 10^{10}$,
$k_2^0 = 10^{10}$,
$E_1 = 7.275\cdot 10^4$,
$E_2 = 8\cdot 10^4$,
$\Delta H_1 = - 5\cdot 10^4$,
$\Delta H_2 = - 7\cdot 10^4$,
$\rho C_p = 4.2\cdot 10^3$,
$\bar u_1 = 1$,
$\bar u_2 = 350$ {(physical dimensions are omitted for simplicity)}.
For the steady-state controls $u_1=\bar u_1$ and $u_2 = \bar u_2$, system~\eqref{CSTR3_affine} has an equilibrium $\bar x\approx (0.3683, 0.6189, 357.7354)^\top$.

As in the previous example, we consider the switching control~\eqref{u_bangbang} with $N=2$ and the switching scenario~\eqref{control_parameters} {with
$u_1^{\min} = 0.5$, $u_1^{\max} = 1.5$, $u_2^{\min} = 300$, $u_2^{\max} = 400$}.
Under this choice of parameters and the time horizon $\tau=1$, we numerically determine the equilibrium ${\tilde x}^*$ of system~\eqref{autonomous},
where the vector field $F$ in~\eqref{autonomous} is approximated up to terms of order $M=4$. Using the \texttt{fsolve} function in \texttt{MATLAB}, we obtain
$
{\tilde x}^* \approx \left(
   0.2582478,
   0.6062874,
   357.4668
   \right)^\top
$.
The components of the corresponding periodic solution $x(t)$ of~\eqref{CSTR3_affine} with the initial data $x(0)={\tilde x}^*$ are shown in Fig.~2(b).

\begin{figure}[t]\label{BCH4}
    \begin{subfigure}[b]{0.48\columnwidth}
        \includegraphics[width=0.9\columnwidth]{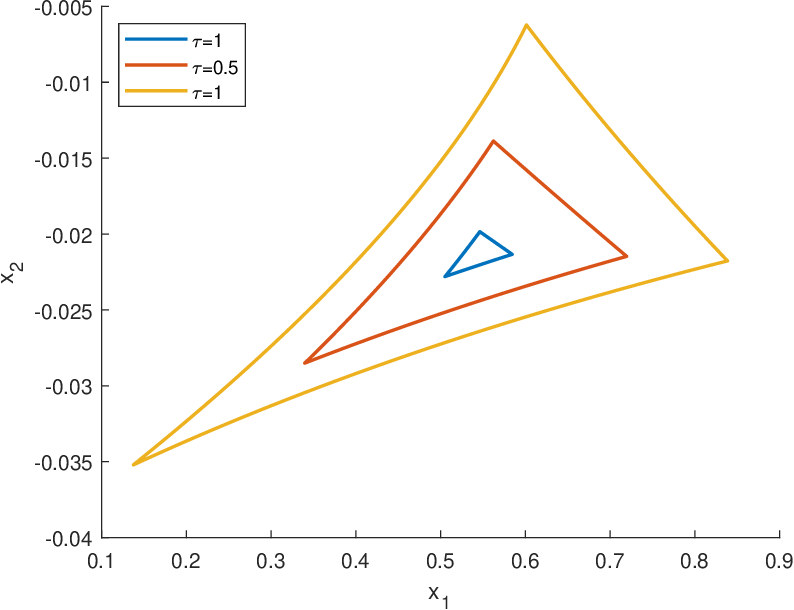}
        \caption{Trajectories of~\eqref{CSTR_affine}: $N=3$}
        \label{fig:right}
    \end{subfigure}
       \hfill
        \begin{subfigure}[b]{0.5\columnwidth}
            \includegraphics[width=0.95\linewidth]{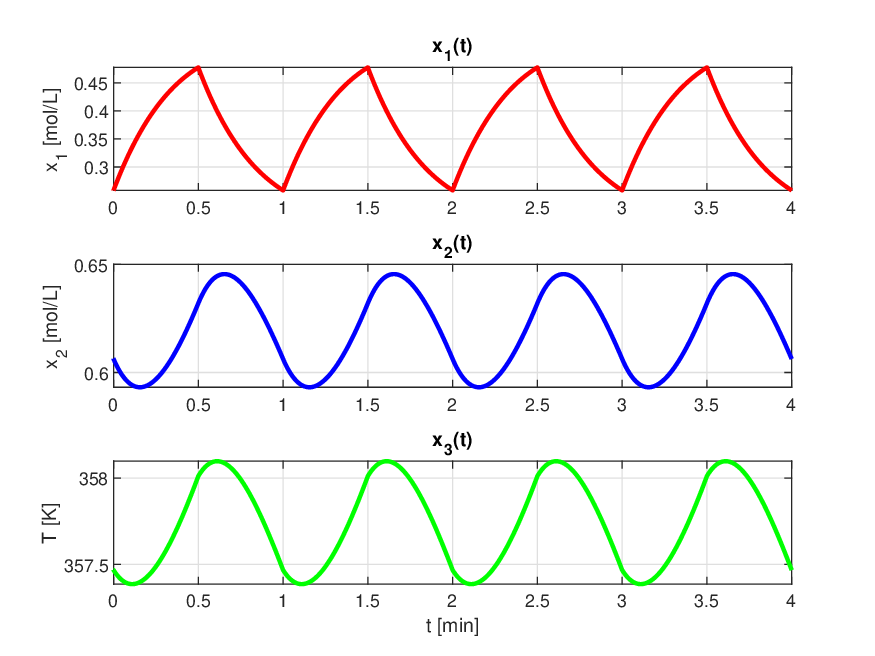}
        \caption{Periodic solution of~\eqref{CSTR3_affine}}
        \label{fig:left}
    \end{subfigure}
\caption{Periodic trajectories: $N=3$ and $n=3$.}
\end{figure}

Note that the matrix $J=\left.\frac{\partial f_0(x)}{\partial x}\right|_{x=\bar x}$ is Hurwitz,
and the solution of the Lyapunov equation $Q A + A^\top Q = - I$ is
$$
Q\approx \begin{pmatrix}
   32.1045& 1.3812 & 4.1283 \\
   1.3812 & 0.5365 & 0.1375 \\
   4.1283 & 0.1375 & 0.6974
\end{pmatrix}.
$$

Numerical computations in~\texttt{MATLAB} show that {both LMIs~\eqref{LMI_f0} and~\eqref{Lyapunov_LMI} with the truncation $\tilde F_M$} hold in $D_0=\{x\in{\mathbb R}^3|\,\, |x_i-{\bar x_i}| \le \delta_i \bar x_i,\; i=1,2,3\}$ with $\delta_1=0.3$, $\delta_2=0.65$, $\delta_3=0.999$, and $\beta=0.1$.
{
In view of Remark~\ref{rem3}, this verifies the assumptions of Theorem~\ref{thm1} and Corollary~\ref{rem3},
which computationally reduce the periodic control problem to finding the zeros of the truncated $\tilde F_M$ $(M=4)$.
Theorem~\ref{thm2} then establishes the stability of the corresponding periodic solution; Fig.~2(b) depicts one such solution.}
%


\section{CONCLUSIONS}\label{sec_conc}

Although {Theorem~\ref{thm1} links the {\em time-varying} periodic control problem to localizing periodic trajectories of the highly nonlinear {\em autonomous} system~\eqref{autonomous}, this framework can be substantially simplified under the assumptions of Corollaries~\ref{cor1} and~\ref{cor2}, reducing the problem to finding equilibria of the vector field $F$. The computational results support these assumptions. In both reaction models considered, the corresponding autonomous system admits an equilibrium $x^*$, whose coordinates can be efficiently computed numerically {\em without solving differential equations}.
Stability is established by Theorem~\ref{thm2} via incremental stability arguments.
Although the BCHD approach is theoretically justified only for sufficiently small periods $\tau$,
the numerical results in Section~\ref{sec_example} show its effectiveness for a ``moderate'' period $\tau=1$.}

\section*{ACKNOWLEDGMENT}
The authors gratefully acknowledge the funding by the European
Regional Development Fund (ERDF) within the programme Research and
Innovation -- Grant Number ZS/2023/12/182138.

\end{document}